\newtheorem*{lemma*}{Lemma}
\newcommand{\R}{\mathbb{R}}
\begin{document}

% We use \emph here because amsart puts the title in all caps.  It can be removed if title case is used.
\title{What's in \emph{YOUR} wallet?}
\author{Lara Pudwell}
\address{
	Department of Mathematics and Statistics\\
	Valparaiso University\\
	Valparaiso, Indiana 46383, USA
}
\author{Eric Rowland}
\address{
	LaCIM \\
	University of Qu\'ebec at Montr\'eal \\
	Montr\'eal, QC H2X 3Y7, Canada
}
\curraddr{
	Department of Mathematics \\
	University of Liege \\
	4000 Li\`ege, Belgium
}
\date{May 24, 2015}

%\begin{abstract}
%We use Markov chains and numerical linear algebra --- and several CPU hours --- to determine the expected number of coins in a person's possession under certain conditions.
%We identify the spending strategy that results in the minimum possible expected number of coins, and we consider two other strategies that are more realistic.
%\end{abstract}

\maketitle

While you probably associate the title of this paper with credit card commercials, we suggest it is actually an invitation to some pretty interesting mathematics.  Every day, when customers spend cash for purchases, they exchange coins.  There are a variety of ways a spender may determine which coins from their wallet to give a cashier in a transaction, and of course a given spender may not use the same algorithm every time.
In this paper, however, we make some simplifying assumptions so that we can provide an answer to the question `What is the expected number of coins in your wallet?'.

Of course, the answer depends on where you live!
A \emph{currency} is a set of denominations.
We'll focus on the currency consisting of the common coins in the United States, which are the quarter (25 cents), dime (10 cents), nickel (5 cents), and penny (1 cent).
However, we invite you to grab your passport and carry out the computations for other currencies.
Since we are interested in distributions of coins, we will consider prices modulo $100$ cents, in the range $0$ to $99$.

The contents of your wallet largely depend on how you choose which coins to use in a transaction.
We'll address this shortly, but let's start with a simpler question.
How does a cashier determine which coins to give you as change when you overpay?
If you are due $30$ cents, a courteous cashier will not give you $30$ pennies.
Generally the cashier minimizes the number of coins to give you, which for $30$ cents is achieved by a quarter and nickel.
Therefore let's make the following assumptions.
\begin{enumerate}
\item\label{1} The fractional parts of prices are distributed uniformly between $0$ and $99$ cents.
\item\label{2} Cashiers return change using the fewest possible coins.
\end{enumerate}

Is there always a unique way to make change with the fewest possible coins?
It turns out that for every integer $n \geq 0$ (not just $0 \leq n \leq 99$) there is a unique integer partition of $n$ into parts $25$, $10$, $5$, and $1$ that minimizes the number of parts.
And this is what the cashier gives you, assuming there are enough coins of the correct denominations in the cash register to cover it, which is a reasonable assumption since a cashier with only $3$ quarters, $2$ dimes, $1$ nickel, and $4$ pennies can give change for any price that might arise.

The cashier can quickly compute the minimal partition of an integer $n$ into parts $d_1, d_2, \dots, d_k$ using the \emph{greedy algorithm} as follows.
To construct a partition of $n = 0$, use the empty partition $\{\}$.
To construct a partition of $n \geq 1$, determine the largest $d_i$ that is less than or equal to $n$, and add $d_i$ to the partition; then recursively construct a partition of $n - d_i$ into parts $d_1, d_2, \dots, d_k$.
For example, if $37$ cents is due, the cashier first takes a quarter from the register; then it remains to make change for $37 - 25 = 12$ cents, which can most closely be approximated (without going over) by a dime, and so on.
The greedy algorithm partitions $37$ into $\{25,10,1,1\}$.\footnote{Maurer~\cite{Maurer} interestingly observes that before the existence of electronic cash registers, cashiers typically did not use the greedy algorithm but instead counted \emph{up} from the purchase price to the amount tendered --- yet still usually gave change using the fewest coins.}

We remark that for other currencies the greedy algorithm does not necessarily produce partitions of integers into fewest parts.
For example, if the only coins in circulation were a $4$-cent piece, a $3$-cent piece, and a $1$-cent piece, the greedy algorithm makes change for $6$ cents as $\{4, 1, 1\}$, whereas $\{3, 3\}$ uses fewer coins.
In general it is not straightforward to tell whether a given currency lends itself to minimal partitions under the greedy algorithm.
Indeed, there is substantial literature on the subject~\cite{Adamaszek--Adamaszek, Cai, Chang--Korsh, Chang--Gill, Kozen--Zaks, Magazine--Nemhauser--Trotter}
and at least one published false ``theorem''~\cite{Jones, Maurer}.
Pearson~\cite{Pearson} gave the first polynomial-time algorithm for determining whether a given currency has this property.

As for spending coins, the simplest way to spend coins is to not spend them at all.
A \emph{coin keeper} is a spender who never spends coins.
Sometimes when you're traveling internationally it's easier to hand the cashier a big bill than try to make change with foreign coins.  Or maybe you don't like making change even with domestic coins, and at the end of each day you throw all your coins into a jar.
In either case, you will collect a large number of coins.
What is the distribution?

It is easy to compute the change you receive if you spend no coins in each of the $100$ possible transactions corresponding to prices from $0$ to $99$ cents.
Since we assume these prices appear with equal likelihood, to figure out the long-term distribution of coins in a coin keeper's collection, we need only tally the coins of each denomination.
A quick computer calculation shows that the coins received from these 100 transactions total 150 quarters, 80 dimes, 40 nickels, and 200 pennies.  In other words, a coin keeper's stash contains 31.9\%~quarters, 17.0\%~dimes, 8.5\%~nickels, and 42.6\%~pennies.

What's in the country's wallet?
The coin keeper's distribution looks quite different from that of coins actually manufactured by the U.S. mint.
In 2014, the U.S. government minted $1580$ million quarters, $2302$ million dimes, $1206$ million nickels, and $8146$ million pennies~\cite{coin production} --- that's
11.9\%~quarters, 17.4\%~dimes, 9.1\%~nickels, and 61.6\%~pennies.

Fortunately, most of us do not behave as coin keepers.
So let us move on to spenders who are not quite so lazy.

\section*{Markov chains}

When you pay for your weekly groceries, the state of your wallet as you leave the store depends only on
\begin{itemize}
\item[(i)]
the state of your wallet when you entered the store,
\item[(ii)]
the price of the groceries, and
\item[(iii)]
the algorithm you use to determine how to pay for a given purchase with a given wallet state.
\end{itemize}
So what we have is a \emph{Markov chain}.

A Markov chain is a system in which for all $t \geq 1$ the probability of being in a given state at time $t$ depends only on the state of the system at time $t - 1$.
Here time is discrete, and at every time step a random event occurs to determine the new state of the system.
The main defining feature of a Markov chain is that the probability of the system being in a given state does not depend on the system's history before time $t - 1$.
For us, the system is the spender's wallet, and the random event is the purchase price.

Let $S = \{s_1, s_2, \dots\}$ be the set of possible states of your wallet.
A Markov chain with finitely many states has a $|S| \times |S|$ \emph{transition matrix} $M$ whose entry $m_{ij}$ is the probability of transitioning to $s_j$ if the current state of the system is $s_i$.
By assumption, $m_{ij}$ is independent of the time at which $s_i$ occurs.
If $v = \left(v_1, v_2, \dots, v_{|S|}\right)$ is a vector whose entry $v_i$ is the probability of your wallet being in state $s_i$ initially, then $v M$ is a vector whose $i$th entry is the probability of the wallet being in state $s_i$ after one step.

The long-term behavior of your wallet is therefore given by $v M^n$ for large $n$.
If the limit $p = \lim_{n \to \infty} v M^n$ exists, then there is a clean answer to a question such as `What is the expected number of coins in your wallet?', since the $i$th entry $p_i$ of $p$ is the long-term probability that your wallet is in state $s_i$.
Moreover, if the limit is independent of the initial distribution $v$, then $p$ is not just the long-term distribution for your wallet; it's the long-term distribution for any wallet using the same spending strategy.

Supposing for the moment that $p$ exists, how can we compute it?
The limiting probability distribution does not change under multiplication by $M$ (because otherwise it's not the limiting probability distribution), so $p M = p$.
In other words, $p$ is a left eigenvector of $M$ with eigenvalue $1$.
There may be many such eigenvectors, but we know additionally that $p_1 + p_2 + \cdots + p_{|S|} = 1$, which may be enough information to uniquely determine the entries of $p$.

It turns out that, under reasonable spending assumptions, the Perron--Frobenius theorem guarantees the existence and uniqueness of $p$.
We need two conditions on the Markov chain --- irreducibility and aperiodicity.
A Markov chain is \emph{irreducible} if for any two states $s_i$ and $s_j$ there is some integer $n$ such that the probability of transitioning from $s_i$ to $s_j$ in $n$ steps is nonzero.
That is, each state is reachable from each other state, so the state space can't be broken up into two nonempty sets that don't interact with each other in the long term.
For each Markov chain we consider, irreducibility follows from assumptions \eqref{1}--\eqref{2} above and details of the particular spending strategy (for example, assumptions \eqref{3}--\eqref{4} below).

The other condition is aperiodicity.
A Markov chain is \emph{periodic} (i.e., not aperiodic) if there is some state $s_i$ such that any transition from $s_i$ to itself occurs in a multiple of $k>1$ steps.
If a wallet is in state $s_i$, then the transaction with price $0$ causes the wallet to transition to $s_i$, so wallet Markov chains are aperiodic.
Therefore the Perron--Frobenius theorem implies that $p$ exists and that $p$ is the dominant left eigenvector of the matrix $M$, corresponding to the eigenvalue $1$.
From $p$ we can compute all sorts of statistics.
For example, the expected number of coins in the wallet is
\[
	\sum_{i=1}^{|S|} p_i |s_i|.
\]
The expected total value of the wallet, in cents, is
\[
	\sum_{i=1}^{|S|} p_i \sigma(s_i),
\]
where $\sigma(s_i)$ is the sum of the elements in $s_i$.

\section*{Spending strategies}

Now that we understand the mechanics of Markov chains, we can use them to study various models of a spender's behavior.
Unlike the cashier, the spender has a limited supply of coins.
When the supply is limited, the greedy algorithm does not always make exact change.
For example, if you're trying to come up with $30$ cents and your wallet state is $\{25, 10, 10, 10\}$ then the greedy algorithm fails to identify $\{10, 10, 10\}$.

Moreover, the spender will not always be able to make exact change.
Since our spender does not want to accumulate arbitrarily many coins (unlike the coin keeper), let's first consider the \emph{minimalist spender}, who spends coins so as to minimize the number of coins in their wallet after each transaction.

\subsection*{The minimalist spender}

Of course, one way to be a minimalist spender is to curtly throw all your coins at the cashier and ask them to give you change (greedily).
Sometimes this can result in clever spending; for example if you have $\{10\}$ and are charged $85$ cents, then you'll end up with $\{25\}$.
However, in other cases this is socially uncouth; if you have $\{1, 1, 1, 1\}$ and are charged $95$ cents, then the cashier will hand you back $\{5, 1, 1, 1, 1\}$, which contains the four pennies you already had.
With some thought, you can avoid altercations by not handing the cashier any coins they will hand right back to you.

In any case, if a minimalist spender's wallet has value $n$ cents and the price is $c$ cents, then the state of the wallet after the transaction will be a minimal partition of $n - c \bmod 100$.
Since there is only one such minimal partition, this determines the minimalist spender's wallet state.
There are $100$ possible wallet states, one for each integer $0 \leq n \leq 99$.
By assumption~\eqref{1}, the probability of transitioning from one state to any other state is $1/100$, so no computation is necessary to determine that each state is equally likely in the long term.
The expected number of coins in the minimalist spender's wallet is therefore $\frac{1}{100} \sum_{i=1}^{100} |s_i| = 4.7$, and the expected total value of the wallet is $\frac{1}{100} \sum_{n=0}^{99} n = 49.5$ cents.
Counting occurrences of each denomination in the $100$ minimal partitions of $0 \leq n \leq 99$ shows that the expected number of quarters is $1.5$; the expected numbers of dimes, nickels, and pennies are $0.8$, $0.4$, and $2$.

Intuitively, one would expect the minimalist's strategy to result in the lowest possible expected number of coins.
Indeed this is the case; let $g(n)$ be the number of coins in the greedy partition of $n$.
Fix a spending strategy that yields an irreducible, aperiodic Markov chain.
Let $e(n)$ be the long-term expected number of coins in the spender's wallet, conditional on the (long-term) total value being $n$ cents.
Since $g(n)$ is the minimum number of coins required to have exactly $n$ cents, $e(n) \geq g(n)$ for all $0 \leq n \leq 99$.
Since the price $c$ is uniformly distributed, the total value $n$ is uniformly distributed, and therefore the long-term expected number of coins is
\[
	\frac{1}{100} \sum_{n=0}^{99} e(n) \geq \frac{1}{100} \sum_{n=0}^{99} g(n) = \frac{47}{10}.
\]

However, the minimalist spender's behavior is not very realistic.
Suppose the wallet state is $\{5\}$ and the price is $79$.
Few people would hand the cashier the nickel in this situation, even though doing so would reduce the number of coins in their wallet after the transaction by $2$.
So let us consider a more realistic strategy.

\subsection*{The big spender}

If a spender does not have enough coins to cover the cost of their purchase and does not need to achieve the absolute minimum number of coins after the transaction, then the easiest course of action is to spend no coins.
In addition to assumptions \eqref{1}--\eqref{2}, let us therefore assume the following.
\begin{enumerate}
\setcounter{enumi}{2}
\item\label{3} If the spender does not have sufficient change to pay for the purchase, he spends no coins and receives change from the cashier.
\end{enumerate}

If the spender does have enough coins to cover the cost, it is reasonable to assume that he overpays as little as possible.
For example, if the wallet state is $\{25, 10, 5, 1, 1\}$ and the price is $13$ cents, then the spender spends $\{10, 5\}$.
How does a spender identify a subset of coins whose total is the smallest total that is greater than or equal to the purchase price?
Well, one way is to examine \emph{all} subsets of coins in the wallet and compute the total of each.
This naive algorithm may not be fast enough for the express lane, but it turns out to be fast enough to compute the transition matrix $M$ in a reasonable amount of time.

Now, there may be multiple subsets of coins in the wallet with the same minimal total.
For example, if the wallet state is $\{10, 5, 5, 5\}$ and the price is $15$ cents, there are two ways to make change.
Using the greedy algorithm as inspiration, let us assume the spender breaks ties by favoring bigger coins and spends $\{10, 5\}$ rather than $\{5, 5, 5\}$.
Therefore we adopt the following assumptions.
\begin{enumerate}
\setcounter{enumi}{3}
\item\label{4} If the spender has sufficient change, he makes the purchase by overpaying as little as possible and receives change if necessary.
\item\label{5} If there are multiple ways to overpay as little as possible, the spender favors $\{a_1, a_2, \dots, a_m\}$ over $\{b_1, b_2, \dots, b_n\}$, where $a_1 \geq a_2 \geq \dots \geq a_m$ and $b_1 \geq b_2 \geq \dots \geq b_n$, if $a_1 = b_1, a_2 = b_2, \dots, a_i = b_i$ and $a_{i+1} > b_{i+1}$ for some $i$.
\end{enumerate}

We refer to a spender who follows these rules as a \emph{big spender}.
Let's check that there are only finitely many states for a big spender's wallet.

\begin{lemma*}
Suppose a spender adheres to assumptions~\eqref{3} and \eqref{4}.
If the spender's wallet has at most $99$ cents before a transaction, then it has at most $99$ cents after the transaction.
\end{lemma*}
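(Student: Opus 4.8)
The plan is to split into the two cases dictated by assumptions~\eqref{3} and~\eqref{4} and, in each, simply track how the \emph{total value} of the wallet changes over the course of the transaction. Write $n$ for the value of the wallet before the transaction, so $0 \le n \le 99$ by hypothesis, and let $c$ with $0 \le c \le 99$ be the (reduced) price. The spender has ``sufficient change'' precisely when some subset of the coins totals at least $c$; since the whole wallet totals $n$, this happens exactly when $n \ge c$, and that is the dichotomy I would use.

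First I would handle $n \ge c$, governed by assumption~\eqref{4}. Here the spender hands over a subset of coins with total $p$, where $c \le p \le n$: such a subset exists (the whole wallet is one), and every subset has total at most $n$. The cashier then returns $p - c$ cents in coins, so the wallet's value afterwards is $n - p + (p - c) = n - c$, which lies between $0$ and $n \le 99$. I would note two small points here: the value $n - c$ does not depend on \emph{which} minimal-overpayment subset is spent, so assumption~\eqref{5} (and which minimal partition the cashier uses) is irrelevant; and $p - c \le n \le 99 < 100$, so the cashier is genuinely returning fewer than $100$ cents and there is no hidden wraparound.

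Next I would handle $n < c$, governed by assumption~\eqref{3}. Then necessarily $c \ge 1$ (as $n \ge 0$), and the spender pays with bills and spends no coins. Because prices are tracked modulo $100$ and the smallest bill is worth $100$ cents, the coins received in change total $(100 - c) \bmod 100 = 100 - c$ cents. Hence the wallet's value afterwards is $n + 100 - c$, and from $0 \le n \le c - 1$ this is at least $100 - c \ge 1$ and at most $99$. Combining the two cases gives the bound, and in fact shows the value never exceeds $99$ and stays nonnegative.

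There is no serious obstacle: the value bookkeeping is immediate once the setup is pinned down. The only thing requiring care --- the ``hard part,'' such as it is --- is stating precisely what ``sufficient change'' means and what change is returned when the spender pays with bills (namely that, with prices reduced mod $100$ and dollar bills worth $100$ cents, the change in coins is $100 - c$). A secondary point worth making explicit is why the lemma is phrased with only assumptions~\eqref{3} and~\eqref{4}: in the sufficient-change case the net effect on the wallet's value is $-c$ regardless of the tie-breaking rule~\eqref{5}, so that assumption plays no role in the bound.
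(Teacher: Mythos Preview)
Your argument is correct and follows essentially the same route as the paper: split into the cases $n \ge c$ and $n < c$, compute the post-transaction value as $n-c$ or $n+100-c$ respectively, and bound each by $99$. Your write-up is a bit more explicit (tracking the intermediate amount $p$ and noting why assumption~\eqref{5} plays no role), but the underlying idea is identical.
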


\begin{proof}
Let $0 \leq c \leq 99$ be the price, and let $n$ be the total value of coins in the spender's wallet.

If $c\leq n$, by \eqref{4}, the spender pays at least $c$ cents, receiving change if necessary, and ends up with $n - c$ cents after the transaction.  Since $n \leq 99$ and $c \geq 0$, we know that $n - c \leq 99$ as well.

If $c>n$,  since $n$ is not enough to pay $c$ cents, by \eqref{3} the spender only pays with bills, and receives $100-c$ in change, for a total of $n + 100 - c = 100 - (c - n)$ after the transaction.  Since $c > n$, we know that $c - n \geq 1$, so $100 - (c - n) \leq 99$.
\end{proof}

If a big spender begins with more than 99 cents in his wallet (because he did well at a slot machine), then he will spend coins until he has at most 99 cents, and then the lemma applies.  Thus any wallet state with more than 99 cents is only transient and has a long-term probability of 0.  Since there are finitely many ways to carry around at most 99 cents, the state space of the big spender's wallet is finite.

We are now ready to set up a Markov chain for the big spender.  The possible wallet states are the states totaling at most $99$ cents.
Each such state contains at most $3$ quarters, $9$ dimes, $19$ nickels, and $99$ pennies, and a quick computer filter shows that of these $4 \times 10 \times 20 \times 100 = 80000$ potential states only $6720$ contain at most $99$ cents.

To construct the $6720 \times 6720$ transition matrix for the big spender Markov chain, we must simulate all $6720 \times 100 = 672000$ possible transactions.
This is where the use of a computer becomes imperative.
Since we are using the naive algorithm, simulating this many transactions is somewhat time-consuming.
The authors' implementation took $8$ hours on a 2.6~GHz laptop.
The list of wallet states and the explicit transition matrix can be downloaded from the authors' web sites, along with a \textit{Mathematica} notebook containing the computations.

From the Perron--Frobenius theorem, we know that the limiting distribution $p$ exists.
However, computing it is another matter.
For matrices of this size, Gaussian elimination is slow.
If we don't care about the entries of $p$ as exact rational numbers but are content with approximations, it's much faster to use numerical methods.
\emph{Arnoldi iteration} is an efficient method for approximating the largest eigenvalues and associated eigenvectors of a matrix, without computing them all.
An implementation of Arnoldi iteration due to Lehoucq and Sorensen~\cite{Lehoucq--Sorensen} is available in the package ARPACK~\cite{ARPACK}, which is free to download and use.
This package is also used by \textit{Mathematica}~\cite{implementation}, so to compute the dominant eigenvector of a matrix one can simply evaluate
\[
	\texttt{Eigenvectors[N[Transpose[}matrix\texttt{]], 1]}
\]
in the Wolfram Language.
The symbol \texttt{N} converts rational entries in the matrix to floating-point numbers, and \texttt{Transpose} ensures that we get a left (not right) eigenvector.

ARPACK is quite fast.
Computing the dominant eigenvector for the big spender transition matrix takes less than a second.
And one finds that there are five most likely states, each with a probability of $0.01000$; they are the empty wallet $\{\}$ and the states consisting of $1$, $2$, $3$, or $4$ pennies.
Therefore $5\%$ of the time the big spender's wallet is in one of these states.

The expected number of coins in the big spender's wallet is approximately $10.05$.
This is more than twice the expected number of coins for the minimalist spender.
The expected numbers of quarters, dimes, nickels, and pennies are $1.06$, $1.15$, $0.91$, and $6.92$.
Assuming that all coin holders are big spenders (which is not actually the case, since cash registers dispense coins greedily), this implies that the distribution of coins in circulation is 10.6\%~quarters, 11.5\%~dimes, 9.1\%~nickels, and 68.9\%~pennies.
Compare this to the distribution of U.S. minted coins in 2014 ---
11.9\%~quarters, 17.4\%~dimes, 9.1\%~nickels, and 61.6\%~pennies.
Relative to the coin keeper strategy, the big spender distribution comes several times closer (as points in $\R^4$) to the U.S. mint distribution.

The expected total value of the big spender's wallet is computed to be $49.5$ cents.
This is the same value as for the minimalist spender, which may be surprising since the two spending strategies are so different.
However, it is a consequence of assumption~\eqref{1}, which specifies that prices are distributed uniformly.
If we ignore all information about the big spender's wallet state except its value, then we get a Markov chain with $100$ states, all equally likely, and the expected wallet value is $49.5$ cents.
Since the expected wallet value is preserved under the function which forgets about the particular partition of $n$, the big spender has the same expected wallet value.
In fact, any spending scenario in which the possible wallet values are all equally likely has an expected wallet value equal to the average of the possible wallet values.

\subsection*{The pennies-first big spender}

We have seen that while the minimalist spender carries $4.7$ coins on average, the big spender carries significantly more.
We can narrow the gap by spending pennies more intelligently.
For example, if the wallet state is $\{1, 1, 1, 1\}$ and the price is $99$ cents, then it is easy to see that spending the four pennies will result in fewer coins than not.

To determine which coins to pay with, the \emph{pennies-first big spender} first computes the price modulo $5$.
If he has enough pennies to cover this price, he hands those pennies to the cashier and subtracts them from the price.
Then he behaves as a big spender, paying for the modified price.

If the pennies-first big spender has fewer than $5$ pennies before a transaction, he has fewer than $5$ pennies after the transaction.
Therefore the pennies-first big spender never carries more than $4$ pennies, and the state space is reduced to only $1065$ states.
Computing the dominant eigenvector of the transition matrix shows that the expected number of coins is $5.74$.
This is only $1$ coin more than the minimum possible value, $4.7$.
So spending pennies first actually gets you quite close to the fewest coins on average.

The expected numbers of quarters, dimes, nickels, and pennies for the pennies-first big spender are $1.12$, $1.27$, $1.35$, and $2.00$.
This raises a question.
Is the expected number of pennies not just approximately $2$ but exactly $2$?
Imagine that the pennies-first big spender is actually two people, one who holds the pennies, and the other who holds the quarters, dimes, and nickels.
When presented with a price $c$ to pay, these two people can behave collectively as a pennies-first big spender without the penny holder receiving information from his partner.
If the penny holder can pay for $c \bmod 5$, then he does; if not, he receives $5 - (c \bmod 5)$ pennies from the cashier.
Since the penny holder doesn't need any information from his partner, all five possible states are equally likely, and the expected number of pennies is exactly $2$.

\section*{Other currencies}

The framework we have outlined is certainly applicable to other currencies.
We mention a few of interest, retaining assumptions \eqref{1}--\eqref{5}.

A \emph{penniless purchaser} is a spender who has no money.
Unless they get a job, their long-term wallet behavior is not difficult to analyze.
On the other hand, a \emph{pennyless purchaser} is a big spender who never carries pennies but does carry other coins.
Pennyless purchasers arise in at least two different ways.
Some governments prefer not to deal with pennies.
Canada, for example, stopped minting pennies as of 2012, so most transactions in Canada no longer involve pennies.
On the other hand, some people prefer not to deal with pennies and drop any they receive into the give-a-penny/take-a-penny tray.
Therefore prices for a pennyless purchaser are effectively rounded to a multiple of $5$ cents, and it suffices to consider $20$ prices rather than $100$.
Moreover, these $20$ prices occur with equal frequency as a consequence of assumption~\eqref{1}.
There are $213$ wallet states composed of quarters, dimes, and nickels that have value at most $99$ cents.
The expected numbers of quarters, dimes, and nickels for the pennyless purchaser are $1.12$, $1.27$, and $1.35$.

If these numbers look familiar, it is because they are the same numbers we computed for the pennies-first big spender!
Since we established that pennies can be modeled independently of the other coins for the pennies-first big spender, one might suspect that the pennies-first big spender can be decomposed into two independent components --- a pennyless purchaser (with $213$ states) and a penny holder (with $5$ states, all equally likely).
When presented with a price $c$ to pay, the pennyless component pays for $c - (c \bmod 5)$ as a big spender, receiving change in quarters, dimes, and nickels if necessary.
As before, if the penny holder can pay for $c \bmod 5$, then he does; if not, he receives $5 - (c \bmod 5)$ pennies in change.
Let us call the product of these independent components a \emph{pennies-separate big spender}.

However, this decomposition doesn't actually work.
For the pennies-\emph{first} big spender, if the price is $c = 1$ cent then the two wallet states $\{5\}$ and $\{5, 1\}$ result in different numbers of nickels after a transaction, so the pennyless component does in fact need information from the penny component.
Even worse, if the price is $c = 1$ cent and the wallet is $\{5\}$ then the pennies-\emph{separate} big spender's wallet becomes $\{5, 1, 1, 1, 1\}$, which is too much change!

Nonetheless, these two Markov chains are closely related.
Suppose $s_i$ and $s_j$ are two states such that some price $c$ causes $s_i$ to transition to $s_j$ for the pennies-first big spender.
If $s_i$ contains fewer than $c \bmod 5$ pennies, then the price $(c + 5) \bmod 100$ causes $s_i$ to transition to $s_j$ for the pennies-separate big spender; otherwise the price $c$ causes this transition.
Therefore the transition matrices for the pennies-first big spender and the pennies-separate big spender are equal, and this explains the numerical coincidence we observed.

Another spending strategy is the \emph{quarter hoarder}, used by college students and apartment dwellers who save their quarters for laundry.
All quarters they receive as change are immediately thrown into their laundry funds.
Of the $10 \times 20 \times 100 = 20000$ potential wallet states containing up to $9$ dimes, $19$ nickels, and $99$ pennies, there are $4125$ states for which the total is at most $99$ cents.
The expected number of coins for a big spender quarter hoarder is $13.74$, distributed as $1.60$ dimes, $1.21$ nickels, and $10.93$ pennies.

Finally, let's consider a currency no one actually uses.
Under assumptions~\eqref{1} and \eqref{2}, Shallit~\cite{Shallit} asked how to choose a currency so that cashiers return the fewest coins per transaction on average.
For a currency $d_1 > d_2 > d_3 > d_4$ with four denominations, he computed that the minimum possible value for the average number of coins per transaction is $389/100$, and one way to attain this minimum is with a $25$-cent piece, $18$-cent piece, $5$-cent piece, and $1$-cent piece.
So as our final model, we consider a fictional country that has adopted Shallit's suggestion of replacing the $10$-cent piece with an $18$-cent piece.
There are two properties of this currency that the U.S. currency does not have.
The first is that the greedy algorithm doesn't always make change using the fewest possible coins.
For example, to make $28$ cents the greedy algorithm gives $\{25, 1, 1, 1\}$, but you can do better with $\{18, 5, 5\}$.

The second property is that there is not always a unique way to make change using the fewest possible coins.
For example, $77$ cents can be given in five coins as $\{25, 25, 25, 1, 1\}$ or $\{18, 18, 18, 18, 5\}$.
The prices $82$ and $95$ also have multiple minimal representations.
(Bryant, Hamblin, and Jones~\cite{Bryant--Hamblin--Jones} give a characterization of currencies $d_1 > d_2 > d_3$ that avoid this property, but for more than three denominations no simple characterization is known.)
According to assumption~\eqref{5}, the big spender breaks ties between minimal representations of $77$, $82$, and $95$ by favoring bigger coins.
For example, the big spender spends $\{25, 25, 25, 1, 1\}$ rather than $\{18, 18, 18, 18, 5\}$ if both are possible.

The cashier doesn't care about getting rid of big coins, however.
So to make things interesting, let's refine assumption~\eqref{2} as follows.
\begin{enumerate}
\item[($2'$)] Cashiers return change using the fewest possible coins; when there are two ways to make change with fewest coins, the cashier uses each half the time.
\end{enumerate}
For example, a cashier makes change for $77$ cents as $\{25, 25, 25, 1, 1\}$ with probability $1/2$ and as $\{18, 18, 18, 18, 5\}$ with probability $1/2$.
Consequently, the transition matrix has some entries that are $1/200$.

For the minimalist spender in this currency, there are $100$ possible wallet states, and the expected number of coins is $\frac{1}{100} \sum_{i=1}^{100} |s_i| = 3.89$.
Note that this is the same computation used to determine the average number of coins per transaction.
In general these two quantities are the same, so reducing the number of coins per transaction is equivalent to reducing the number of coins in the minimalist spender's wallet.
Relative to the U.S. currency, the minimalist spender carries $0.81$ fewer coins in the Shallit currency.

The number of wallet states in the Shallit currency totaling at most $99$ cents is $4238$.
The pennies-first big spender strategy is not such a sensible way to spend coins, since if your wallet state is $\{18, 1, 1, 1\}$ and the price is $18$ cents then you don't want to spend pennies first.
For the big spender, however, the expected number of coins is $8.63$, so this currency also reduces the number of coins in the wallet of a big spender.
The expected numbers of quarters, $18$-cent pieces, nickels, and pennies are $0.66$, $0.98$, $2.10$, and $4.89$.

\section*{Cashing in}

In this paper, we have taken the question `What's in your wallet?'\ quite literally.
We determined the long-term behavior of wallets with various currencies under four spending strategies --- the coin keeper, the minimalist spender, the big spender, and the pennies-first big spender.
Numerical methods in linear algebra afford us access to this statistical information --- information that is arguably interesting to know and that may not be obtainable any simpler way.

However, while Arnoldi iteration allows us to quickly compute the dominant eigenvector of a transition matrix, the computation of the matrix itself can be time-consuming.
We used the naive algorithm for simulating a transaction, which looks at all subsets of a wallet to determine which subset to spend.
Is there a faster algorithm for computing, for example, the big spender's behavior?

There are many currencies we have not considered.
It would be interesting to know in which country of the world a big spender (or a smallest-denomination-first big spender) is expected to carry the fewest coins.
Or, which currency $d_1 > d_2 > d_3 > d_4$ of four denominations minimizes the expected number of coins in your wallet?

There are also spending strategies we have not considered, and indeed there are good reasons to vary some of the assumptions.
For example, assumption~\eqref{5} isn't universally true.
Given the choice between spending a dime or two nickels, the big spender spends the dime.  While the big spender minimizes the number of coins he spends, another spender might instead break ties by spending \emph{more} coins.
We could consider a \emph{heavy spender} who maximizes the number of coins spent from his wallet in a given transaction according to the following modification of assumption~\eqref{5}.
\begin{enumerate}
\item[($5'$)] \label{5prime} If there are multiple ways to overpay as little as possible, the spender favors $\{a_1, a_2, \dots, a_m\}$ over $\{b_1, b_2, \dots, b_n\}$ if $m > n$. 
\end{enumerate}
A heavy spender favors $\{5, 5\}$ over $\{10\}$.
Do assumptions~\eqref{3}, \eqref{4}, and (\hyperref[5prime]{$5'$}) completely determine the behavior of a heavy spender?
If so, how much lighter is the heavy spender's wallet?

Of course, the million-dollar question is whether typical people use any of these spending strategies.
How many coins is an actual person expected to carry?
Does a typical person have a consistent spending strategy, or does their behavior depend more on how much of a rush they're in?
If some people do use a consistent strategy, to what extent is the pennies-first big spender more realistic than the big spender?
On second thought, maybe it's just easier to use your credit card.

\end{document}